\newtheorem{theorem}{Theorem}
\newtheorem{corollary}{Corollary}
\newtheorem{proposition}{Proposition}
\newtheorem{lemma}{Lemma}
\newenvironment{remark}
{\smallskip\noindent{\bf Remark\/}.}{\smallskip\par}
\newenvironment{proof}{\begin{ProofwCaption}{Proof}}{\end{ProofwCaption}}
\newenvironment{proof*}[1]{\begin{ProofwCaption}{{#1}}}{\end{ProofwCaption}}
\newenvironment{ProofwCaption}[1]%
  {\addvspace\theorempreskipamount \noindent{\it #1.}\rm}%
  {\qed \par \addvspace\theorempostskipamount}
\newcommand{\qedsymbol}{\mbox{$\Box$}}
\newcommand{\qed}{\hfill\qedsymbol}
\newcommand{\CC}{{\mathbb C}}
\newcommand{\PP}{{\mathbb P}}
\newcommand{\RR}{{\mathbb R}}
\newcommand{\ZZ}{{\mathbb Z}}
\newcommand{\calO}{{\cal O}}
\newcommand{\calE}{{\cal E}}
\newcommand{\calD}{{\cal D}}
\newcommand{\uu}{\underline{u}}
\newcommand{\vv}{\underline{v}}
\newcommand{\1}{\underline{1}}
\newcommand{\uupsilon}{{\underline{\upsilon}}}
\newcommand{\mm}{\underline{m}}
\newcommand{\sss}{\underline{s}}
\newcommand{\ttt}{\underline{t}}
\newcommand{\gggg}{\underline{g}}
\newcommand{\MM}{{\underline M}}
\newcommand{\NNN}{{\underline N}}
\title{On a Newton filtration for functions on a curve singularity}
\author{W.~Ebeling and S.~M.~Gusein-Zade
\thanks{Partially supported by the DFG (Eb 102/7--1), the Russian government grant 11.G34.31.0005, RFBR--10-01-00678,
NSh--4850.2012.1 and Simons--IUM fellowship.
Keywords: filtrations, curve singularities, Newton diagrams, Poincar\'e series.
AMS 2010 Math. Subject Classification: 32S05, 14M25, 16W70.
}
}
\date{}
\begin{document}
\selectlanguage{english}

\maketitle

\begin{abstract}
In a previous paper, there was defined a multi-index filtration on the ring of functions on a hypersurface singularity corresponding to its Newton diagram generalizing (for a curve singularity) the divisorial one. Its Poincar\'e series was computed for plane curve singularities non-degenerate with respect to their Newton diagrams. Here we use another technique to compute the Poincar\'e series for plane curve singularities without the assumption that they are non-degenerate with respect to their Newton diagrams. We show that the Poincar\'e series only depends on the Newton diagram and not on the defining equation. 
\end{abstract}

\section*{Introduction}
In \cite{EG1, EG2}, there were defined two multi-index filtrations on the ring $\calO_{\CC^n,0}$
of germs of holomorphic functions in $n$ variables associated to a Newton diagram $\Gamma$ in $\RR^n$
and to a germ of an analytic function $f : (\CC^n,0) \to (\CC,0)$ with this Newton diagram. We assumed that the function $f$
was non-degenerate with respect to its Newton diagram $\Gamma$. These filtrations are essentially filtrations
on the ring $\calO_{V,0}=\calO_{\CC^n,0}/(f)$ of germs of functions on the hypersurface singularity $V=\{f=0\}$.
They correspond to the quasihomogeneous valuations on the ring $\calO_{\CC^n,0}$ defined by the facets of 
the diagram $\Gamma$. These facets correspond to some components of the exceptional divisor of a toric resolution
of the germ $f$ constructed from the diagram $\Gamma$. Such a component defines the corresponding divisorial
valuation on the ring $\calO_{\CC^n,0}$. For $n\ge 3$ (and for a $\Gamma$-non-degenerate $f$) these valuations induce
divisorial valuations on the ring $\calO_{V,0}$ and define the corresponding multi-index filtration on it.
The filtration defined in \cite{EG1} was regarded as a certain ``simplification'' of the divisorial one.
This appeared not to be the case. For example, a general formula for the Poincar\'e series of this filtration
is not known even for the number of variables $n=2$. For Newton diagrams of special type, A.~Lemahieu
identified this filtration with a so called embedded filtration on $\calO_{V,0}$ \cite{L}.
In \cite{L}, a formula for the Poincar\'{e} series of the embedded filtration for a hypersurface singularity was given.
H.~Hamm studied the embedded filtration and the corresponding Poincar\'{e} series for complete intersection singularities \cite{H}.

In \cite{EG2}, there was given an ``algebraic'' definition of the divisorial valuation corresponding to
a Newton diagram (for $n\ge 3$) somewhat similar to the definition in \cite{EG1}. Roughly speaking,
the difference consists in using the ring $\calO_{\CC^n,0}[x_1^{-1}, \ldots, x_n^{-1}]$ instead of $\calO_{\CC^n,0}$.
For $n=2$, this definition does not give, in general, a valuation, but an order function (see the definition below).
For a $\Gamma$-non-degenerate $f\in \calO_{\CC^2,0}$, this order function was described as a
``generalized divisorial valuation'' defined by the divisorial valuations corresponding to all the points
of intersection of the resolution (normalization) $\widetilde{V}$ of the curve $V$ with the corresponding component
of the exceptional divisor. This permitted to apply the technique elaborated in \cite{IJM} and to compute the
corresponding Poincar\'e series. (This technique has no analogue which could
be applied to degenerate $f$, or to the case $n>2$, or to the filtration defined in \cite{EG1}.)
In this case the Poincar\'e series depends only on the Newton diagram $\Gamma$
and does not depend on the function $f$ with $\Gamma_f=\Gamma$.

The definitions in \cite{EG1} and \cite{EG2} make also sense for functions $f$ degenerate with respect
to their Newton diagrams. Here we compute the Poincar\'e series of the filtration introduced in \cite{EG2}
for $n=2$ directly from the definition without the assumption that
$f$ is non-degenerate with respect to the Newton diagram.
We show that the answer is the same as in \cite[Corollary 1]{EG2} for non-degenerate $f$.
Thus, for $n=2$, the Poincar\'e series of this filtration depends only on the Newton diagram $\Gamma$.
One can speculate that the same holds for $n\ge 3$ and for the Poincar\'e series of the filtration defined in \cite{EG1}.

We hope that some elements of the technique used here can be applied to the case $n\ge 3$ and/or to
the filtration defined in \cite{EG1} as well.

One motivation to study (multi-variable) Poincar\'e series of filtrations comes from the fact that they are sometimes related or even coincide with appropriate monodromy zeta functions or with Alexander polynomials (see e.g.\ \cite{IJM}).
We show that the obtained formula for the Poincar\'e series has a relation to the (multi-variable) Alexander polynomial of a collection of functions.

\section{Filtrations associated to Newton diagrams}
Let $(V,0)$ be a germ of a complex analytic variety and let $\calO_{V,0}$ be the ring of
germs of holomorphic functions on $(V,0)$. A map $v: \calO_{V,0}\to \ZZ_{\ge 0}\cup \{+\infty\}$
is an {\em order function} on $\calO_{V,0}$ if $v(\lambda g)=v(g)$ for a non-zero $\lambda \in \CC$ and
$v(g_1+g_2)\ge \min\{v(g_1), v(g_2)\}$. (If, moreover, $v(g_1g_2)=v(g_1)+v(g_2)$,
the map $v$ is a {\em valuation} on $\calO_{V,0}$.) A collection $\{v_1, v_2, \ldots, v_r\}$
of order functions on $\calO_{V,0}$ defines a multi-index filtration on $\calO_{V,0}$:
\begin{equation}\label{filtration}
J(\uupsilon):=\{g\in \calO_{V,0}: \vv(g)\ge\uupsilon\}
\end{equation}
for $\uupsilon=(\upsilon_1, \ldots, \upsilon_r)\in\ZZ_{\ge 0}^r$, $\vv(g)=(v_1(g), \ldots, v_r(g))$,
$\uupsilon'=(\upsilon'_1, \ldots, \upsilon'_r)\ge \uupsilon''=(\upsilon''_1, \ldots, \upsilon''_r)$
iff $\upsilon'_i\ge \upsilon''_i$ for all $i=1, \ldots, r$. 
(It is convenient to assume that the
equation (\ref{filtration}) defines the subspaces $J(\uupsilon)\subset \calO_{V,0}$ for all
$\uupsilon\in\ZZ^r$.) The {\em Poincar\'e series} $P_{\{v_i\}}(\ttt)$ ($\ttt=(t_1, \ldots, t_r)$) of the filtration (\ref{filtration}) can be defined as 
\begin{equation}\label{Poincare}
P_{\{v_i\}}(\ttt):=
\frac
{
\left(\sum_{\uupsilon\in\ZZ^r}\dim(J(\uupsilon)/J(\uupsilon+\1))\ttt^\uupsilon\right)\prod_{i=1}^r(t_i-1)
}
{(t_1t_2\cdots t_r-1)}\,,
\end{equation}
where $\1=(1,\ldots, 1)\in\ZZ^r$, $\ttt^\uupsilon=t_1^{\upsilon_1}\cdots t_r^{\upsilon_r}$
(see e.g. \cite{IJM}; it is defined when the dimensions of all the factor spaces $J(\uupsilon)/J(\uupsilon+\1)$
are finite). In \cite{IJM} it was explained that the Poincar\'e series (\ref{Poincare}) is equal
to the integral with respect to the Euler characteristic
\begin{equation}\label{integral}
P_{\{v_i\}}(\ttt)=\int_{\PP\calO_{V,0}}\ttt^{\vv(g)}d\chi
\end{equation}
over the projectivization $\PP\calO_{V,0}$ of the space $\calO_{V,0}$. (In the integral $t_i^{+\infty}$ has to be assumed to be equal to zero.)

Let $f\in\calO_{\CC^n,0}$ be a function germ with the Newton diagram $\Gamma=\Gamma_f\subset\RR^n$, $V:=\{f=0\}$. Let $\gamma_i$, $i=1, \ldots, r$, be (all) the facets of the diagram $\Gamma$ and let $\ell_i(\bar{k})=c_i$ be the reduced equation of the hyperplane containing the facet $\gamma_i$. One has $\ell_i(\bar{k})=\sum_{j=1}^n \ell_{ij}k_j$ ($\bar{k}=(k_1, \ldots, k_n)$), where $\ell_{ij}$ are positive integers, $\gcd{(\ell_{i1}, \ldots, \ell_{in})=1}$.

For $g\in {\calO}_{\CC^n,0}[x_1^{-1},\ldots, x_n^{-1}]$, $g=\sum\limits_{\bar{k}\in\ZZ^n}a_{\bar{k}}{\bar{x}}^{\bar{k}}$ ($\bar{x}=(x_1, \ldots, x_n)$), let
$$
u_i(g):=\min_{{\bar{k}}:a_{\bar{k}}\ne0}\ell_i({\bar{k}})\,.
$$
One can see that $u_i$ is a valuation on ${\calO}_{\CC^n,0}\subset {\calO}_{\CC^n,0}[x_1^{-1},\ldots, x_n^{-1}]$. For a Newton diagram $\Lambda$ in $\RR^n$, let 
$$
u_i(\Lambda):=\min_{\bar{k} \in \Lambda} \ell_i(\bar{k}).
$$
(It is also equal to $u_i(g)$ for any germ $g$ with the Newton diagram $\Lambda$.)
Let $g_{\gamma_i}(\bar{x}):=\sum\limits_{\bar{k}:\ell_i(\bar{k})=u_i(g)}a_{\bar{k}}{\bar{x}}^{\bar{k}}$.

The following two collections of order functions on ${\calO}_{\CC^n,0}$ corresponding to the pair $(\Gamma,f)$ were defined in \cite{EG1} and \cite{EG2} respectively:
\begin{eqnarray}
v'_i(g) & := & \sup_{h\in \calO_{\CC^n,0}} u_i(g+hf)\,, \\
v''_i(g) & := & \sup_{h\in \calO_{\CC^n,0}[x_1^{-1}, \ldots, x_n^{-1}]} u_i(g+hf)\,.
\end{eqnarray}
($v'_i$ and $v''_i$
are, in general, not valuations, at least when $n=2$ or when $f$ is degenerate with respect to its Newton diagram $\Gamma$.)
They can be considered as order functions on the ring  ${\calO}_{V,0}={\calO}_{\CC^n,0}/(f)$ as well. (These order functions and moreover the corresponding Poincar\'e series are, in general, different.) 

Assume that the function $f$ is non-degenerate with respect to its Newton diagram $\Gamma$ and let $p:(X,D)\to(\CC^n,0)$ be a toric resolution of $f$ corresponding to the Newton diagram $\Gamma$. The facets $\gamma_1$, \dots, $\gamma_r$ of $\Gamma$ correspond to some components (say, $E_1$, \dots, $E_r$) of the exceptional divisor $D$. Let $\widetilde{V}$ be the strict transform of the hypersurface singularity $V$ (it is a smooth complex manifold) and let $\calE_i:=\widetilde{V}\cap E_i$, $i=1, \ldots, r$.

For $n\geq 3$, the set $\calE_i$ is an irreducible component of the exceptional divisor
$\calD=D\cap\widetilde{V}$ of the resolution $p_{\vert\widetilde{V}}:(\widetilde{V}, \calD)\to (V,0)$.
The divisorial valuation $v_{\calE_i}$ on ${\calO}_{V,0}$ defined by this component coincides
with $v''_i$: see \cite{EG2}. For $n=2$, the set $\calE_i$ is, in general, reducible (if the integer
length $s_i$ of the facet (edge) $\gamma_i$ is greater than 1). Let
$\calE_i=\bigcup\limits_{j=1}^{s_i}\calE_i^{(j)}$ be the decomposition into the irreducible
components ($\calE_i^{(j)}$ are points on the curve $\widetilde V$). One can show that in this case
$v''_i(g)=\min\limits_j v_{\calE_i^{(j)}}(g)$,
where $v_{\calE_i^{(j)}}$ are the corresponding divisorial valuations on ${\calO}_{V,0}$. This order function $v_i''$ can be regarded as a generalized divisorial valuation.

\section{The Poincar\'e series}
Let $\Gamma$ be a Newton diagram in $\RR^2$ with the facets (edges) $\gamma_1$, \dots, $\gamma_r$ and let $f$ be a function germ $(\CC^2,0)\to (\CC, 0)$ with the Newton diagram $\Gamma$. One can see that $f=x^ay^b\prod\limits_{i=1}^r f_i$, where $f_i$ is such that
$f_{\gamma_i}=\lambda_i{\bar{x}}^{{\bar{k}}_i}(f_i)_{\gamma_i}$ for certain $\lambda_i \in \CC^*$ and $\bar{k}_i \in  \ZZ_{\ge 0}^2$. The Newton diagram $\Gamma_i$ of the germ $f_i$ consists of one segment congruent (by a shift; in particular, parallel) to the facet $\gamma_i$ with the vertices on the coordinate lines in
$\RR^2$.

Let $\MM_i=\uu(\Gamma_i)$, i.e.
$\MM_i=(M_{i1}, \ldots, M_{ir})$,
where $M_{ij}=u_j(\Gamma_i)$. 
(One can see that $\MM_i=s_i\mm_i$ in the notations of
\cite{EG2}.)

\begin{theorem} \label{theo1}
One has
\begin{equation} \label{main}
P_{\{v''_i\}}(\ttt)=\frac{\prod\limits_{i=1}^r(1-\ttt^{\MM_i})}{(1-\ttt^{\uu(x)})(1-\ttt^{\uu(y)})}\,.
\end{equation}
\end{theorem}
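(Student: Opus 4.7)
The plan is to compute the Poincaré series via the integral representation (\ref{integral}), after reformulating the filtration using the Laurent ring $A:=\calO_{\CC^2,0}[x^{-1},y^{-1}]$. Unwinding the definition of $v''_i$ (with $h$ ranging over $A$), the filtered piece $J(\uupsilon):=\{[g]\in\calO_{V,0}:\vv''([g])\ge\uupsilon\}$ is the image in $\calO_{V,0}$ of $(B(\uupsilon)+fA)\cap\calO_{\CC^2,0}$, where $B(\uupsilon)\subset A$ consists of the Laurent series $\phi$ with $\uu(\phi)\ge\uupsilon$. Thus $v''_i([g])$ is simply the $u_i$-order of $g$ viewed as an element of the quotient $A/fA$. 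The key simplification is that $x^ay^b$ is a unit in $A$, so $fA=f'A$ with $f':=\prod_{i=1}^r f_i$; hence the whole $\vv''$-filtration descends from the $\uu$-filtration on the $A$-module $A/f'A$ via the natural map $\calO_{V,0}\to A/f'A$. This reduction already explains why the factor $x^ay^b$ of $f$ will not appear in the final formula.

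Next, I would compute the integral $\int_{\PP\calO_{V,0}}\ttt^{\vv''(g)}d\chi$ by a stratification of $\PP\calO_{V,0}$ according to ``top monomial'' data of Laurent representatives maximizing $\uu$. For each $[g]$, a maximizing representative $g+hf\in A$ has well-defined exponents on its Newton boundary, and by additivity of Euler characteristic the integral decomposes as a sum $\sum_{\bar k}\chi(S_{\bar k})\,\ttt^{\uu(\bar k)}$ over the lattice points $\bar k\in\ZZ^2$ that arise as such tops. Admissibility of $\bar k$ and the Euler characteristic of the stratum $S_{\bar k}$ are controlled by the factorization $f'=\prod f_i$: an inclusion-exclusion over subsets $I\subseteq\{1,\ldots,r\}$ indexing which factors $f_i$ are ``used'' in forming the maximizing representative reorganizes the sum into
$$P(\ttt)=\sum_{I\subseteq\{1,\ldots,r\}}(-1)^{|I|}\frac{\ttt^{\sum_{i\in I}\MM_i}}{(1-\ttt^{\uu(x)})(1-\ttt^{\uu(y)})}=\frac{\prod_{i=1}^r(1-\ttt^{\MM_i})}{(1-\ttt^{\uu(x)})(1-\ttt^{\uu(y)})},$$
giving the claimed formula. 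The denominator reflects that the ambient set of exponents of $A$ is a product of two ``free'' directions $x,y$ of weight $\uu(x),\uu(y)$; the numerator reflects that each factor $f_i$ imposes a relation whose leading part has $\uu$-value $\uu(\Gamma_i)=\MM_i$.

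The main obstacle is the degenerate case, in which the leading forms $(f_i)_{\gamma_i}$ are not regular. In the $\Gamma$-non-degenerate setting the admissible $\bar k$ and their strata are transparent and the inclusion-exclusion becomes a clean Koszul-type identity, recovering \cite[Corollary 1]{EG2}. For degenerate $f$, several classes $[g]$ may belong to positive-dimensional families sharing the same top monomial, and the key technical step is to verify that the Euler characteristics of those families still collapse to exactly the non-degenerate count. I expect this to be handled either by a direct analysis of the linear spaces $(B(\uupsilon)+f'A)\cap\calO_{\CC^2,0}$ and their cosets (where the linear algebra alone depends on $\Gamma$ and not on the particular coefficients of the $f_i$), or by a constructible deformation of $f$ within $\Gamma$ to a non-degenerate function and invariance of the Euler-characteristic integral. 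Either route yields a Poincaré series that depends only on the Newton diagram $\Gamma$, as asserted.
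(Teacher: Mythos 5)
Your overall skeleton---the integral (\ref{integral}) over $\PP\calO_{V,0}$, a stratification, and an inclusion-exclusion over subsets $I\subset\{1,\ldots,r\}$ recording which factors $f_i$ are ``used''---matches the paper's strategy, and your reduction to $f'=\prod_{i=1}^r f_i$ via units in the Laurent ring is a correct observation (implicit in the paper, which works throughout with the leading parts $f_{\gamma_i}$). But there is a genuine gap exactly where you flag one: you never establish why the Euler characteristics of the strata collapse to the non-degenerate count, and neither of your two proposed routes works as stated. The deformation route is circular: invariance of $\int\ttt^{\vv''(g)}d\chi$ under a deformation of $f$ within the class $\Gamma_f=\Gamma$ is essentially the assertion of the theorem itself, since the order functions $v''_i$ depend on $f$ and not only on $\Gamma$, and there is no a priori constructibility or semicontinuity statement for this integral over the infinite-dimensional space $\PP\calO_{\CC^2,0}$. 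The ``direct linear algebra'' route also assumes the conclusion: the subspaces $J(\uupsilon)$ genuinely depend on the coefficients of $f$ (for instance through the remainders modulo $f_{\gamma_i}$); it is only the dimensions of the quotients, i.e.\ the output of the theorem, that turn out to be coefficient-independent. A further imprecision: your intermediate decomposition $\sum_{\bar k}\chi(S_{\bar k})\,\ttt^{\uu(\bar k)}$ over single lattice points cannot be right as stated, since for a diagram $\Lambda$ with several edges the value $\vv''(g)=\uu(\Lambda)$ is not of the form $\underline{\ell}(\bar k)$ for one point; the correct indexing is by Newton diagrams $\Lambda=\bar k+\Gamma_I$, with weight $\ttt^{\underline{\ell}(\bar k)+\sum_{i\in I}\MM_i}$.

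What the paper supplies, and what your sketch is missing, is a concrete mechanism for the vanishing, valid without any non-degeneracy hypothesis because it uses only the leading parts $f_{\gamma_i}$. It stratifies $\PP\calO_{\CC^2,0}$ by the Newton diagram $\Lambda$ of $g$ and kills the unwanted strata by fibring them with $\CC^*$-families (hence zero Euler characteristic). Two nontrivial inputs make this work. First, a division-with-remainder lemma for Laurent polynomials: for $g$ with $\vv''(g)=\uupsilon>\uu(\Lambda)$ and any maximizing representative $\widetilde g=g+hf$, one writes $\widetilde g_{\gamma_i}=f_{\gamma_i}p_i+r_i$ with $r_i$ supported in $s_i$ prescribed consecutive lattice points, nonzero, and---crucially---independent of the choice of $h$; the fibres are then $\{g+(\lambda-1)r_i\}_{\lambda\in\CC^*}$. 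Second, a combinatorial lemma asserting that the polygon cut out by the constraints $\ell_i(\bar k)\ge\upsilon_i$ inside $\Sigma_\Lambda$ has an edge parallel to some $\gamma_i$ and strictly longer than it, which is what makes room for the remainder construction. A simpler $\CC^*$-scaling of a tail $g_2$ of $g$ disposes of diagrams $\Lambda$ having an edge not congruent to any $\gamma_i$, and the sign $\chi(\PP\calO^\Lambda_{\uu(\Lambda)})=(-1)^{\# I}$ comes from an inclusion-exclusion over the divisibility conditions $f_{\gamma_i}\mid g_{\gamma_i}$, where all proper intersections are again fibred by $\CC^*$-families and the full intersection is a single projective class. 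Your final inclusion-exclusion display has the right shape, but without these vanishing and counting results it is unsupported, and they constitute the actual content of the proof.
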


\begin{corollary}
For the number of variables $n=2$ the Poincar\'e series
$P_{\{v''_i\}}(\ttt)$ depends only on the Newton diagram $\Gamma$
and does not depend on $f$ with $\Gamma_f=\Gamma$.
\end{corollary}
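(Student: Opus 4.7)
The plan is to compute $P_{\{v''_i\}}(\ttt)$ directly from \eqref{integral} by analyzing the order functions $v''_i$ through the Newton-polygon structure of $f$.

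First, I would reduce to the ring $\calO_{V',0} := \calO_{\CC^2,0}/(f_1 \cdots f_r)$. Since $x,y$ are units in $R = \calO_{\CC^2,0}[x^{-1},y^{-1}]$, one has $(f)R = (f_1 \cdots f_r)R$, and pairwise coprimality of the $f_i$'s in the UFD $\calO_{\CC^2,0}$ gives $(f_1 \cdots f_r)R \cap \calO_{\CC^2,0} = (f_1 \cdots f_r)\calO_{\CC^2,0}$. Therefore the kernel of the surjection $\calO_{V,0} \twoheadrightarrow \calO_{V',0}$ is precisely the set of classes on which every $v''_i$ equals $+\infty$, and these contribute trivially to the Poincar\'e series \eqref{Poincare}.

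Second, I would describe $v''_i$ on a lift $g \in \calO_{\CC^2,0}$ algorithmically. Since $u_i$ is a valuation on $R$, one can strictly raise $u_i(g)$ by subtracting a multiple of $f_1 \cdots f_r$ exactly when the $u_i$-leading part $g_{\gamma_i}$ is divisible, in the Laurent polynomial ring at $u_i$-level $u_i(g)$, by the $u_i$-initial form of $f_1 \cdots f_r$. For $j \neq i$ the segment $\Gamma_j$ is not parallel to $\gamma_i$, so the $u_i$-leading part of $f_j$ is a single monomial; hence the relevant initial form equals $(f_i)_{\Gamma_i}$ times a Laurent monomial. Iterating this reduction, $v''_i(g)$ becomes the $u_i$-value of a fully reduced representative, expressed purely in terms of the data $\{(f_i)_{\Gamma_i}\}_i$.

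Finally, I would stratify $\PP \calO_{V',0}$ by $\vv''(g)$ and evaluate the integral by inclusion-exclusion over subsets $S \subseteq \{1,\ldots,r\}$ of facets at which reduction is possible at the relevant levels. Each $S$ contributes with sign $(-1)^{|S|}$ and a shift by $\sum_{i \in S} \MM_i = \uu(\prod_{i \in S} f_i)$, producing
$$P(\ttt) = \sum_{S}(-1)^{|S|}\,\frac{\ttt^{\sum_{i \in S}\MM_i}}{(1-\ttt^{\uu(x)})(1-\ttt^{\uu(y)})} = \frac{\prod_{i=1}^r(1-\ttt^{\MM_i})}{(1-\ttt^{\uu(x)})(1-\ttt^{\uu(y)})}.$$
The main obstacle is showing this computation is independent of the particular $f$ with Newton diagram $\Gamma$. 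The resolving observation is that divisibility of $g_{\gamma_i}$ by $(f_i)_{\Gamma_i}$ is a \emph{linear} condition on the coefficients of $g$ at the corresponding $u_i$-level, of codimension equal to the integer length $s_i$ of $\gamma_i$, regardless of whether $(f_i)_{\Gamma_i}$ has distinct or repeated roots. This codimension invariance under degeneration is what makes the Euler-characteristic stratum count depend only on the combinatorial data of $\Gamma$, yielding \eqref{main}.
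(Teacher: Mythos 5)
Your setup (steps 1 and 2) is sound: the reduction to $f_1\cdots f_r$, and the one-step criterion that $u_i(g)$ can be raised modulo $(f)$ exactly when $g_{\gamma_i}$ is divisible by $f_{\gamma_i}$ up to a Laurent monomial (using that the $u_i$-initial form of $f_j$, $j\neq i$, is a monomial), are both correct and agree with how the paper thinks about $v''_i$. The genuine gap is in the final step, which is where essentially all of the paper's work lies. You assert that the stratified integral collapses to an inclusion--exclusion in which each $S\subseteq\{1,\ldots,r\}$ contributes $(-1)^{|S|}\ttt^{\sum_{i\in S}\MM_i}$ times the monomial series, but this requires three Euler-characteristic statements, none of which follows from your ``codimension $s_i$'' observation: (a) for a fixed Newton diagram $\Lambda$ of $g$, the set of $g$ with $\vv''(g)=\uupsilon$ strictly greater than $\uu(\Lambda)$ has $\chi=0$ (the paper's Proposition~\ref{bad_functions}); (b) diagrams $\Lambda$ with an edge not congruent to any $\gamma_i$ contribute $0$ (Proposition~\ref{bad_diagrams}); (c) for $\Lambda$ composed of edges congruent to $\gamma_i$, $i\in I$, one has $\chi(\PP\calO^{\Lambda}_{\uu(\Lambda)})=(-1)^{\# I}$ (Proposition~\ref{value}). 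The paper proves (a) by a delicate mechanism: a generalized division with remainder for Laurent polynomials (Lemma~\ref{division}) yields a \emph{canonical} remainder $r_i$ supported at $s_i$ consecutive integer points of an edge $\delta_i$ of the auxiliary polygon $\Lambda^*$, and Lemma~\ref{lemma1} guarantees some $\delta_i$ is strictly longer than $\gamma_i$ --- exactly what is needed so that the family $g+(\lambda-1)r_i$, $\lambda\in\CC^*$, stays inside the stratum, fibering it by $\CC^*$-families of zero Euler characteristic.

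Your ``resolving observation'' --- that divisibility by $(f_i)_{\Gamma_i}$ is a linear condition of codimension $s_i$ whether or not $(f_i)_{\Gamma_i}$ has repeated roots --- is indeed the reason one should \emph{expect} independence of $f$, and it is implicitly the content of Lemma~\ref{division}; but codimensions alone do not determine the Euler characteristics of these strata. The strata are not linear sections: they are complements of arrangements inside cylinders (nonvanishing of vertex coefficients defining $\calO^{\Lambda}$, plus nonvanishing of iterated remainders), the levels at which divisibility is tested depend on the diagram of $g$ itself, and reductions at one facet change the values at the others. Note also that your bookkeeping attributes the shift $\sum_{i\in S}\MM_i$ to ``reduction being possible at $S$,'' whereas in the correct computation loci where further reduction is possible are precisely the ones killed by (a); the shift actually arises because the surviving diagrams are $\Lambda=\bar{k}+\Gamma_I$, whose stratum carries the value $\uu(\Lambda)=\underline{\ell}(\bar{k})+\sum_{i\in I}\MM_i$. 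Your formula thus lands on the right answer, but the derivation conflates two different mechanisms, and filling the gap amounts to reproving Lemmas~\ref{lemma1}--\ref{division} and Propositions~\ref{bad_functions}--\ref{value}.
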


For the proof of Theorem~\ref{theo1} we need some auxiliary statements. We first introduce some notation.

For a Newton diagram $\Lambda$ in $\RR^2$, let
$\Sigma_{\Lambda}$ be the corresponding Newton polygon:
$\Sigma_{\Lambda}=
\bigcup\limits_{\bar{q} \in\Lambda} \left( \bar{q} +\RR_{\ge0}^2 \right)$.
Let $\calO^{\Lambda}$ be the set of functions
$g\in\calO_{\CC^2,0}$ with the Newton diagram
$\Gamma_g=\Lambda$. For $\uupsilon\in\ZZ_{\ge0}^r$, let
$\calO^{\Lambda}_{\uupsilon}=
\{g\in \calO^{\Lambda}: {\vv}''(g)=\uupsilon\}$. 
The set $\calO_{\CC^2,0}\setminus\{0\}$ is the disjoint union
of the sets $\calO^{\Lambda}$ over all diagrams $\Lambda$. According to (\ref{integral}) one has

\begin{equation} \label{partition}
P_{\{v''_i\}} (\ttt) =
\sum_{\Lambda} \int_{\PP\calO^{\Lambda}}\ttt^{{\vv}''(g)}d\chi\,.
\end{equation}

For $g\in\calO^\Lambda$, one has $v''_i(g)\ge u_i(\Lambda)$.

We shall first show that the integrals in (\ref{partition}) can be restricted only to functions $g \in \PP\calO^{\Lambda}$ with $\vv''(g) = \uu(\Lambda)$.

\begin{proposition}\label{bad_functions}
For a Newton diagram $\Lambda$ in $\RR^2$, let
$\uupsilon\in\ZZ_{\ge0}^r$ be such that
$\uupsilon> \uu(\Lambda)$, i.e. $\upsilon_i\ge u_i(\Lambda)$ for all $i=1, \ldots, r$ and $\upsilon_i > u_i(\Lambda)$ for 
some $i$. Then the set
$\PP\calO^\Lambda_{\uupsilon}$ has Euler characteristic equal to zero.
\end{proposition}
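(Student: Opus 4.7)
\emph{Plan.} The plan is to exhibit a free $\CC^*$-action on the constructible set $\PP\calO^\Lambda_\uupsilon$; since $\chi(\CC^*)=0$ and a free $\CC^*$-action yields a locally trivial $\CC^*$-fibration, this will force $\chi(\PP\calO^\Lambda_\uupsilon)=0$.

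I fix an index $i_0$ with $\upsilon_{i_0}>u_{i_0}(\Lambda)$ (which exists by hypothesis) and use it to build the action. Unpacking the definition of $v''_{i_0}$, the condition $v''_{i_0}(g)=\upsilon_{i_0}$ for $g\in\calO^\Lambda$ says there is a Laurent polynomial $h\in\calO_{\CC^2,0}[x^{-1},y^{-1}]$ with $u_{i_0}(g+hf)=\upsilon_{i_0}$, while no such $h$ pushes this higher. Equivalently, the part of $g+hf$ at level $u_{i_0}=\upsilon_{i_0}$, viewed in the cokernel $Q_{i_0}$ of multiplication by $f_{\gamma_{i_0}}$ on the space of Laurent polynomials on the parallel line $\ell_{i_0}=\upsilon_{i_0}-u_{i_0}(f)$, gives a nonzero ``essential class'' $e_{i_0}(g)\in Q_{i_0}$ that is $\CC$-linear in $g$ and independent of $h$. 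Thus $\calO^\Lambda_\uupsilon$ can be described as the locus where $g\in\calO^\Lambda$, $e_{i_0}(g)\ne 0$, and analogous conditions hold at the other indices.

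I then select a monomial $m=x^{k_1}y^{k_2}$ with $\ell_{i_0}(k_1,k_2)=\upsilon_{i_0}$ and $\ell_j(k_1,k_2)>\upsilon_j$ for every $j\ne i_0$; such a lattice point in $\ZZ_{\ge 0}^2$ is obtained by moving far enough along the line $\ell_{i_0}=\upsilon_{i_0}$ into the region where the other linear forms $\ell_j$ are large. The $\CC^*$-action is defined by $\mu\cdot g:=g+(\mu-1)c_m(g)\,m$, i.e.\ by rescaling the coefficient of $m$ in $g$ by $\mu$. Under this, (i) the Newton diagram remains $\Lambda$ since $m$ sits strictly above $\Lambda$; (ii) $v''_j$ for $j\ne i_0$ is preserved since $\ell_j(m)>\upsilon_j$ means that adding any multiple of $m$ cannot affect the $u_j$-part at or below level $\upsilon_j$; (iii) $e_{i_0}$ has its $[m]$-component rescaled by $\mu$, so remains nonzero for generic $\mu$. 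The action commutes with global rescaling $g\mapsto cg$, descends to $\PP\calO^\Lambda_\uupsilon$, and is free there.

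The main obstacle will be to ensure $e_{i_0}(\mu\cdot g)\ne 0$ for \emph{all} $\mu\in\CC^*$: if $e_{i_0}(g)$ has several nonzero coordinates in $Q_{i_0}$, a particular $\mu$ might cancel the $[m]$-contribution against the others. I plan to stratify $\calO^\Lambda_\uupsilon$ according to which coordinate of $e_{i_0}(g)$ in a fixed basis of $Q_{i_0}$ is the first nonzero one, and to select the monomial $m$ (and the coordinate to scale) stratum by stratum. On each locally closed stratum the construction yields a genuine free $\CC^*$-action, so $\chi=0$ on each stratum, and $\chi(\PP\calO^\Lambda_\uupsilon)=0$ by additivity. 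Corner cases where no suitable $m$ in $\ZZ_{\ge 0}^2$ exists (only when $\upsilon_{i_0}$ is barely above $u_{i_0}(\Lambda)$ and the directions $\ell_j$ conflict) will be handled by first applying a Laurent correction of the form $hf$ to shift the essential class into a monomial of the desired form.
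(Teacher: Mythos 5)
Your overall strategy is in fact the same as the paper's: extract from the condition $v''_{i_0}(g)=\upsilon_{i_0}$ a canonical, $h$-independent ``essential class'' (this is exactly the remainder modulo $f_{\gamma_{i_0}}$ furnished by the paper's Lemma~\ref{division}), and kill the Euler characteristic of $\PP\calO^\Lambda_{\uupsilon}$ by free $\CC^*$-families. But two steps fail as written. First, your monomial $m$ with $\ell_{i_0}(m)=\upsilon_{i_0}$ and $\ell_j(m)>\upsilon_j$ for all $j\ne i_0$ need not exist, and ``moving far enough along the line'' is not how one could find it: since every $\ell_j$ has positive coefficients, the line $\ell_{i_0}=\upsilon_{i_0}$ has negative slope, so in both directions it leaves the quadrant $\ZZ^2_{\ge 0}$, and moreover the forms $\ell_j$ whose normals lie on one side of that of $\ell_{i_0}$ \emph{decrease} along it. The feasible region for $m$ is a bounded segment (essentially the edge in direction $i_0$ of $\bigcap_j H_j$, which must in addition be intersected with $\Sigma_\Lambda$ --- a constraint you never impose, although without $m\in\Sigma_\Lambda$ the family would change the Newton diagram when $c_m(g)=0$ is perturbed), and for your prescribed index $i_0$ it may contain no lattice point at all. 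This is not a corner case, and a correction by $hf$ cannot repair it: the perturbation must be applied to $g$ itself, inside $\Sigma_\Lambda\cap\ZZ^2_{\ge 0}$, whereas $hf$ only changes the representative used to compute the class. The paper's Lemma~\ref{lemma1} exists precisely to address this point: one does \emph{not} fix in advance the index where $\upsilon_i>u_i(\Lambda)$, but proves that for \emph{some} $i$ the edge $\delta_i$ of the modified diagram $\Lambda^*=\bigcap_j H_j\cap\Sigma_\Lambda$ is parallel to $\gamma_i$ and strictly longer, hence carries $s_i$ consecutive lattice points on which a remainder can be supported. Your proposal has no substitute for this geometric step.

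Second, the action $\mu\cdot g=g+(\mu-1)c_m(g)m$ is not free: it fixes every $g$ with $c_m(g)=0$, a large locus on which the argument gives nothing; and where $c_m(g)\ne 0$ it moves $e_{i_0}(g)$ only along the fixed vector $[m]$, so a special $\mu$ can cancel the class --- you notice this, but the stratification-by-first-nonzero-coordinate fix is only sketched and runs into the same existence problem (to scale a prescribed coordinate of $e_{i_0}$ you need the corresponding window monomial to be admissible, i.e.\ again a lattice point in the constrained region). The paper dissolves both difficulties with a single device: perturb along the \emph{whole} canonical remainder, $g\mapsto g+(\lambda-1)r_i(g)$, where $r_i(g)\ne 0$ is supported on $s_i$ consecutive lattice points of $\delta_i$. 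Since $g\mapsto r_i(g)$ is linear and is the identity on polynomials supported in the window, one gets $r_i\bigl(g+(\lambda-1)r_i(g)\bigr)=\lambda\, r_i(g)$: the essential class is scaled exactly, never vanishes for $\lambda\in\CC^*$, and freeness is automatic. To make your proof work you should replace the fixed-monomial rescaling by this remainder translation and precede it by an analogue of Lemma~\ref{lemma1}; as it stands, the proposal has genuine gaps at both the existence of $m$ and the freeness/invariance of the action.
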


\begin{remark} The direct analogue of this proposition does not hold for the filtration defined by the order functions $\{ v_i' \}$. As an example one can take $f(x,y)=y^5+xy^2+x^2y+x^5$ with the Newton diagram $\Gamma$ with the set of vertices $\{ (0,5), (1,2), (2,1), (5,0) \}$. One has $\ell_1(\bar{k})=3k_x+k_y$, $\ell_2(\bar{k}) = k_x+k_y$, $\ell_3(\bar{k})=k_x+3k_y$. Let $\Lambda$ be the Newton diagram with the set of vertices $\{ (0,5), (1,2)\}$. One has $\uu(\Lambda)=(5,3,7)$. Let us take $\uupsilon = (7,3,7)$. One can see that for the order functions $\{ v_i' \}$ the set $\calO^\Lambda_{\uupsilon}$ consists of the germs  $g(x,y)=\sum a_{ij}x^iy^j$ from $\calO^\Lambda$ with $a_{05}=a_{12} \neq 0$ and $a_{06}=a_{13}=0$. This gives $\chi(\PP\calO^\Lambda_{\uupsilon})=1$. For the order functions $\{ v_i'' \}$ the set $\calO^\Lambda_{\uupsilon}$ consists of the germs  with $a_{05}=a_{12} \neq 0$, $a_{06}=a_{13}=0$ and $a_{07} - a_{14}+a_{21} \neq 0$. This gives $\chi(\PP\calO^\Lambda_{\uupsilon})=0$ in accordance with Proposition~\ref{bad_functions}.
\end{remark}

For the proof of Proposition~\ref{bad_functions} we need two lemmas. 

Let $\calO^\Lambda$ be non-empty.
For $g \in \calO^\Lambda$ with $\vv''(g)=\uupsilon$ and for $i=1, \ldots, r$,
one can find $h(=h_i) \in \calO_{\CC^2,0}[x^{-1}, y^{-1}]$ such that the Newton diagram of $g+hf$ lies in the (closed) half-plane $H_i=\{\bar{k}:\ell_i(\bar{k})\ge \upsilon_i\}$,
but there are no $h$ for which the Newton diagram of $g+hf$ lies in the open half-plane $\{\bar{k}:\ell_i(\bar{k})> \upsilon_i\}$. Let $\Lambda^*$ be the union of the compact edges of the (infinite) polygon
$\Sigma_\Lambda^*=
\bigcap\limits_{i=1}^r H_i \cap \Sigma_\Lambda$, 
where $\Sigma_\Lambda$ is the Newton polygon corresponding to $\Lambda$. ($\Lambda^*$ is not, in general, 
a Newton diagram since it may have non-integral vertices. Nevertheless we shall use the name ``diagram" for it.)

\begin{lemma} \label{lemma1}
In the situation described above, there exists an index $i$ $(1\le i\le r)$ such that $\Lambda^*$ has an edge $\delta_i$ parallel to $\gamma_i$ and (strictly) longer than $\gamma_i$.
\end{lemma}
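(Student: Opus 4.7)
The plan is to use the cancellation condition $\vv''(g)=\uupsilon$ to force $\Lambda$ itself to contain edges parallel to the relevant $\gamma_i$'s of integer length at least $s_i$, and then read off the geometric conclusion from slicing $\Sigma_\Lambda$ at higher $\ell_i$-level. The algebraic step is the following auxiliary claim: whenever $\upsilon_i>u_i(\Lambda)$, the set $\Lambda\cap\{\ell_i=u_i(\Lambda)\}$ is an edge of $\Lambda$ parallel to $\gamma_i$ of integer length at least $s_i$. To prove it, fix such an $i$, choose $h_i\in\calO_{\CC^2,0}[x^{-1},y^{-1}]$ with $u_i(g+h_i f)\ge\upsilon_i$, and observe that $(h_i f)|_{\ell_i=c}=0$ for every $c<u_i(\Lambda)$. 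Introduce the one-variable coordinate $t_i=\bar{x}^{\tau_i}$ along $\gamma_i$, where $\tau_i$ is the primitive lattice vector with $\ell_i(\tau_i)=0$. Each $\ell_i$-graded piece of a Laurent polynomial, after dividing out a fixed monomial at that level, becomes a Laurent polynomial in $t_i$, and multiplication by $f_{\gamma_i}$ becomes multiplication by a fixed polynomial $P_i(t_i)\in\CC[t_i,t_i^{-1}]$ of $t_i$-width $s_i$. I would then argue that $h_i$ can have no non-zero monomial at $\ell_i$-level $D<u_i(\Lambda)-c_i$: at the smallest such $D$, the contribution $h_i|_{\ell_i=D}\cdot f_{\gamma_i}$ would be the unique, and hence non-zero, term of $(h_i f)|_{\ell_i=D+c_i}$, contradicting $(h_i f)|_{\ell_i=D+c_i}=-g|_{\ell_i=D+c_i}=0$. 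Consequently $(h_i f)|_{\ell_i=u_i(\Lambda)}$ equals $h_i|_{\ell_i=u_i(\Lambda)-c_i}\cdot f_{\gamma_i}$, and the cancellation equation places $g|_{\ell_i=u_i(\Lambda)}$, viewed in $\CC[t_i,t_i^{-1}]$, in the principal ideal $(P_i)$. Since multiplication by $P_i$ raises $t_i$-width by exactly $s_i$, the $t_i$-support of $g|_{\ell_i=u_i(\Lambda)}$ has width $\ge s_i$; lying inside $\Lambda\cap\{\ell_i=u_i(\Lambda)\}$, this forces the latter to be the asserted edge.

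Next I would translate the algebraic conclusion into the geometry of $\Sigma_\Lambda^*$. Writing $\eta_i$ for the edge produced by the auxiliary claim, of integer length $L_i\ge s_i$, the slice $\Sigma_\Lambda\cap\{\ell_i=\upsilon_i\}$ is a segment parallel to $\gamma_i$ whose two endpoints sit on the boundary pieces of $\Sigma_\Lambda$ adjacent to $\eta_i$---either edges of $\Lambda$ of other slopes, or coordinate rays along the axes. None of these adjacent pieces is parallel to $\gamma_i$, so shifting the cutting line from $\ell_i=u_i(\Lambda)$ up to the strictly larger value $\upsilon_i$ strictly extends the segment at each of its two endpoints. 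Hence $\Sigma_\Lambda\cap\{\ell_i=\upsilon_i\}$ has integer length strictly greater than $L_i\ge s_i$.

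Finally, $\delta_i=\Sigma_\Lambda^*\cap\{\ell_i=\upsilon_i\}$ is this slice further intersected with the remaining half-planes $H_j$, $j\ne i$. If the cutting does not shrink $\delta_i$ below $s_i$, we are done with the chosen $i$. The main obstacle, and the technical core of the proof, is the case in which other active constraints (those $j\ne i$ with $\upsilon_j>u_j(\Lambda)$) mutually truncate the resulting edges: one must rule out that all active $\delta_j$'s can drop to length at most $s_j$ simultaneously. I would handle this by applying the first two steps to each active index in turn and then doing a combinatorial bookkeeping around the polyline $\Lambda^*$---the cumulative truncations caused by mutual cutting are bounded by the cumulative strict excesses produced by the slice extensions, so at least one $\delta_i$ must retain integer length strictly greater than $s_i$, giving the desired edge.
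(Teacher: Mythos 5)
Your Steps 1 and 2 are correct, and Step 1 in fact does real work that the paper compresses into a single sentence: your auxiliary claim (if $\upsilon_i>u_i(\Lambda)$ then $\Lambda\cap\{\ell_i=u_i(\Lambda)\}$ must be an edge parallel to $\gamma_i$ of integer length $\ge s_i$, because the lowest $\ell_i$-level part of $g$ must be divisible by $f_{\gamma_i}$ up to a monomial) is precisely the justification behind the paper's terse assertion that ``all edges of $\Lambda^*$ are parallel to edges of $\Lambda$'', and your graded-cancellation argument for it is sound --- it is the same mechanism that the paper isolates as Lemma~\ref{division} and exploits in the proof of Proposition~\ref{bad_functions}. (One small inaccuracy in Step 2: the slice does not strictly extend at \emph{each} endpoint --- if the edge at the minimal level is adjacent to the vertical coordinate ray, the left endpoint keeps $k_x=a_0$; strict extension always holds on at least one side, which is all you need.)

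The genuine gap is in Step 3, which you yourself identify as the technical core and then dispatch in one sentence. The assertion that ``the cumulative truncations caused by mutual cutting are bounded by the cumulative strict excesses'' is exactly what has to be proved, and your sketch supplies no mechanism for it; in particular, a local excess-versus-truncation count does not close by itself --- you must explain why the excess cannot leak out entirely at the two unbounded ends of the polyline. The paper's resolution is a short \emph{global} argument rather than a cumulative one: project everything to the $k_x$-axis; since (by your auxiliary claim) every edge of $\Lambda^*$ is parallel to an edge of $\Lambda$, the projections correspond as intervals $[a_{i-1},a_i]$ versus $[b_{i-1},b_i]$ (possibly degenerate); the coordinate rays pin the ends, giving $b_0=a_0$ and $a_\sigma\le b_\sigma$; and since $\Lambda^*\ne\Lambda$, an elementary dichotomy on $d_i=b_i-a_i$ (the last index with $d_i<0$, or else the first with $d_i>0$) yields an index with $[a_{i-1},a_i]\subsetneq[b_{i-1},b_i]$, i.e.\ an edge of $\Lambda^*$ strictly longer than the parallel edge of $\Lambda$. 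Such an edge cannot lie on the line of that edge of $\Lambda$ (it would then be contained in it), so it lies on a cut line $\ell_i=\upsilon_i$ and is parallel to $\gamma_i$, and your bound $\ge s_i$ on the parallel edge of $\Lambda$ finishes the proof; note that with this bookkeeping your Step 2 becomes unnecessary. So the missing idea is concretely the endpoint constraints $b_0=a_0$, $b_\sigma\ge a_\sigma$ coming from the coordinate rays together with the interval-containment dichotomy; without them your plan is a statement of intent, not a proof.
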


\begin{proof}
We shall prove that there exists an edge of the diagram $\Lambda^*$ which is (strictly) longer than the edge of $\Lambda$ parallel to it. This implies that this edge is parallel to a certain edge $\gamma_i$ of the diagram $\Gamma$ and is longer than it. 
Since we assumed $\calO^{\Lambda}$ being non-empty, all edges of $\Lambda^*$ are parallel to edges of $\Lambda$. Let $a_0 < a_1 < \ldots < a_\sigma$ be the $k_x$-coordinates of all the vertices of $\Lambda$. Let $b_0 \leq b_1 \leq \ldots \leq b_\sigma$ be the $k_x$-coordinates of the corresponding vertices of $\Lambda^*$, i.e.\ $[b_{i-1}, b_i]$ is the projection of the segment of $\Lambda^*$ parallel to the segment of $\Lambda$ projected to $[a_{i-1},a_i]$ ($b_{i-1}$ and $b_i$ may coincide). One can see that $a_0=b_0$ and $a_\sigma \leq b_\sigma$. Then either $b_i=a_i$ for all $i=0,1, \ldots , \sigma$ or $[a_{i-1},a_i] \subsetneq [b_{i-1}, b_i]$ for some $i \in \{ 0,1, \ldots, \sigma \}$. But the first case cannot happen since $\Lambda^*\neq \Lambda$.
\end{proof}

We shall also use the following generalized version of the division with remainder for Laurent polynomials.

\begin{lemma}\label{division}
Let $p(z)$ and $q(z)$ be Laurent polynomials in $z$. Assume that  ${\rm supp}\, q$ has length $s$, i.e.
$q(z)=\sum\limits_{i=0}^s b_iz^{d+i}$ with $b_0\ne 0$,
$b_s\ne 0$, and let $d'$ be an integer. Then the polynomial $p(z)$ has a unique representation of the form
$p(z)=q(z)a(z) + r(z)$ with $r(z)=\sum\limits_{i=0}^{s-1} c_iz^{d'+i}$ .
\end{lemma}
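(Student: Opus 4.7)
The plan is to establish uniqueness and existence separately, treating the statement as a two-sided division algorithm: the non-vanishing of both $b_0$ and $b_s$ lets us invert multiplication by $q$ from both the top and the bottom.

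\textbf{Uniqueness.} Suppose $p(z)=q(z)a_1(z)+r_1(z)=q(z)a_2(z)+r_2(z)$ with $r_1,r_2$ supported in the window $\{d',d'+1,\ldots,d'+s-1\}$. Setting $A(z):=a_1(z)-a_2(z)$, one has $q(z)A(z)=r_2(z)-r_1(z)$, a Laurent polynomial supported in the same window of length $s$. If $A\neq 0$, let $\alpha z^e$ and $\beta z^f$ be its lowest and highest monomials ($e\le f$, $\alpha,\beta\neq 0$). Since $b_0,b_s\neq 0$, the product $q(z)A(z)$ has lowest monomial $\alpha b_0 z^{d+e}$ and highest monomial $\beta b_s z^{d+s+f}$. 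Containment in the window forces $d+e\ge d'$ and $d+s+f\le d'+s-1$, i.e.\ $e\ge d'-d$ and $f\le d'-d-1$, contradicting $e\le f$. Hence $A=0$ and then $r_1=r_2$.

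\textbf{Existence.} Write $p(z)=\sum_k p_k z^k$ (finite support) and look for $a(z)=\sum_j a_j z^j$ such that every coefficient of $p(z)-q(z)a(z)$ with index outside $[d',d'+s-1]$ vanishes. The coefficient of $z^m$ in $q(z)a(z)$ equals $\sum_{i=0}^{s}b_i\,a_{m-d-i}$. For $m\ge d'+s$, solve this equation for $a_{m-d-s}$ using $b_s\neq 0$:
\begin{equation*}
a_{m-d-s}=\frac{1}{b_s}\Bigl(p_m-\sum_{i=0}^{s-1}b_i\,a_{m-d-i}\Bigr),
\end{equation*}
which gives a downward recursion determining $a_j$ for $j\ge d'-d$, starting from $a_j=0$ for $j$ large (possible since $p$ has finite support). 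Symmetrically, for $m\le d'-1$, solve for $a_{m-d}$ using $b_0\neq 0$:
\begin{equation*}
a_{m-d}=\frac{1}{b_0}\Bigl(p_m-\sum_{i=1}^{s}b_i\,a_{m-d-i}\Bigr),
\end{equation*}
giving an upward recursion that determines $a_j$ for $j\le d'-d-1$, starting from $a_j=0$ for $j$ very negative.

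\textbf{Putting it together.} The two ranges $\{j\ge d'-d\}$ and $\{j\le d'-d-1\}$ are complementary, so every $a_j$ is assigned exactly once; moreover only finitely many $a_j$ are nonzero (because $p$ has finite support, each recursion terminates in $0$'s after finitely many steps), hence $a(z)$ is indeed a Laurent polynomial. By construction $r(z):=p(z)-q(z)a(z)$ has support in $[d',d'+s-1]$, yielding the desired representation. The only potential obstacle is a compatibility condition where the two recursions meet; this is absent because the ranges partition $\ZZ$ exactly, matching the fact that the window has length $s$ equal to the support-length of $q$.
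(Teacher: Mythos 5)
Your proof is correct. Note that the paper itself offers no proof of this lemma at all --- it is stated as an evident generalization of division with remainder and the text passes directly to the proof of Proposition~2 --- so your write-up supplies an argument the authors left implicit, and it does so accurately: uniqueness by comparing the lowest and highest monomials of $q(z)A(z)$ (which survive precisely because $b_0\ne 0$ and $b_s\ne 0$) against a window of length $s$, and existence by the two one-sided recursions. One remark on the final paragraph: the reason no compatibility condition arises is slightly stronger than ``the ranges partition $\ZZ$'' --- it is that the two families of equations involve disjoint sets of unknowns. Indeed, the equation for a coefficient index $m\ge d'+s$ involves only $a_j$ with $j\in[m-d-s,\,m-d]\subseteq[d'-d,\infty)$, while the equation for $m\le d'-1$ involves only $a_j$ with $j\in[m-d-s,\,m-d]\subseteq(-\infty,\,d'-d-1]$; moreover $m\mapsto m-d-s$ and $m\mapsto m-d$ put the equations in bijection with the unknowns of the respective ranges, so each equation is used exactly once to determine exactly one coefficient. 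Spelling this out would make the ``putting it together'' step airtight, but the index bookkeeping in your recursions already contains it, so this is a presentational point, not a gap.
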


\begin{proof*}{Proof of Proposition~\ref{bad_functions}}
Let $i$ be as in Lemma~\ref{lemma1}.
Let the integer length of $\gamma_i$ be equal to $s_i$. Then the segment $\delta_i$ contains at least $s_i$ integer points. Let $Q_1, \ldots, Q_{s_i}$ be $s_i$ consecutive integer points on the segment $\delta_i$.
Let $g\in\calO^\Lambda$ be such that
$\vv''(g)=\uupsilon$ ($>\uu(\Lambda)$) and let
$\widetilde{g}=g+hf$ be a Laurent polynomial such that
${\rm supp}\,{\widetilde{g}}\subset
H_i=\{\ell_i(\bar{k})\ge \upsilon_i\}$. Lemma~\ref{division} implies that
$\widetilde{g}_{\gamma_i}(x,y)=f_{\gamma_i}(x,y)p_i(x,y) + r_i(x,y)$ where
${\rm supp}\,{r}\subset \{Q_1, \ldots, Q_{s_i}\}$ and $r_i\ne 0$ (otherwise $v_i(g)>\upsilon_i=u_i(\Lambda^*)$).
(Let us recall that ${\rm  supp}\, f_{\gamma_i}$ consists of $s_i+1$ consecutive points on the line containing $\gamma_i$.)
Moreover the polynomial $r_i$ depends only on $g$ and does not depend on the choice of $\widetilde{g}$ (i.e.\ on
the choice of $h$).

One can see that all functions $g'$ of the form $g+(\lambda -1)r_i$ with $\lambda \ne 0$ lie in
$\calO^{\Lambda}$ and satisfy the condition $v''_i(g')=v''_i(g)$. Thus the set $\PP\calO^{\Lambda}_{\uupsilon}$ is fibred by $\CC^*$-families and therefore its Euler characteristic is equal to zero.
\end{proof*}

Proposition~\ref{bad_functions} implies that 
\begin{equation}\label{int_good}
P_{\{ v''_i\}} (\ttt) =
\sum_{\Lambda} \int_{\PP\calO^{\Lambda}_{\uu(\Lambda)}}\ttt^{{\vv}''(g)}d\chi\,. 
\end{equation}

\begin{proposition}\label{bad_diagrams}
Suppose that a Newton diagram $\Lambda$ contains an edge $\delta$ not congruent to any edge of $\Gamma$, i.e. either not parallel to all the edges $\gamma_i$, or parallel to one of them, but of another length.
Then $\chi(\PP\calO^{\Lambda}_{\uu(\Lambda)})=0$.
\end{proposition}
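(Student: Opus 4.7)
The plan is to adapt the $\CC^*$-action argument of Proposition~\ref{bad_functions}: for each $g\in\calO^\Lambda_{\uu(\Lambda)}$ I will produce a nonzero $r(g)\in\calO_{\CC^2,0}$ such that $\lambda\cdot g:=g+(\lambda-1)\,r(g)$ gives a well-defined $\CC^*$-action on $\calO^\Lambda_{\uu(\Lambda)}$. The Euler characteristic of the projectivisation will then equal that of the fixed-point set (by the localization property of $\chi$ under $\CC^*$-actions), which will vanish --- either because the set is empty or because it carries its own residual $\CC^*$-factor. Let $P$ and $Q$ denote the two endpoints of $\delta$.

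The construction of $r(g)$ branches on whether $\delta$ is parallel to some $\gamma_i$. If $\delta$ is not parallel to any $\gamma_i$, let $B\subseteq\{P,Q\}$ be the endpoints of $\delta$ lying on some $\ell_k$-minimizing face of $\Lambda$, and set $r(g)=g_\delta^B$, i.e.\ the $\delta$-part of $g$ with $B$-coefficients removed. The key property is $\ell_k(V)>u_k(\Lambda)$ for every integer point $V\in\delta\setminus B$ and every $k$ (interior points of $\delta$ have this because $\ell_k|_\delta$ is strictly monotone when $\delta\not\parallel\gamma_k$, and non-$B$ endpoints are by definition off every $\ell_k$-min face), so $u_k(r(g))>u_k(\Lambda)$ and the standard inequality $u_k(g+hf)\ge\min(u_k((\lambda\cdot g)+hf),u_k(r(g)))$ preserves $v''_k(g)=u_k(\Lambda)$. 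The Newton diagram is preserved because endpoints either stay fixed (if in $B$) or get multiplied by $\lambda\ne 0$.

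If $\delta$ is parallel to some $\gamma_{i_0}$, then necessarily $\delta$ is itself the $\ell_{i_0}$-minimizing edge of $\Lambda$ (Newton diagrams have pairwise non-parallel edges), with $s_\delta\ne s_{i_0}$. Writing $p(z)=g_\delta/\bar{x}^P$ and $q(z)=f_{\gamma_{i_0}}/\bar{x}^{P_{\gamma_{i_0}}}$ as Laurent polynomials in $z=x^{-b_{i_0}}y^{a_{i_0}}$ (where $\bar{x}^{P_{\gamma_{i_0}}}$ denotes a vertex of $\gamma_{i_0}$), in the sub-case $s_\delta>s_{i_0}$ Lemma~\ref{division} with $d'=1$ provides a decomposition $p=qa+r$ with $r$ supported on $\{z^1,\dots,z^{s_{i_0}}\}$, which corresponds to interior integer points of $\delta$. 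A graded-ring analysis (parallel to the divisibility analysis behind the proof of Proposition~\ref{bad_functions}) identifies $v''_{i_0}(g)=u_{i_0}(\Lambda)$ with $r\ne 0$. Then $r(g):=\bar{x}^P\,r(z)$ works: the modification touches only interior coefficients of $\delta$, so every $g_{\delta'}$ with $\delta'\ne\delta$ (and in particular the endpoint coefficients of $\delta$) is unchanged, and the new remainder is $\lambda r\ne0$. When $s_\delta<s_{i_0}$ the condition $f_{\gamma_{i_0}}\nmid g_\delta$ is automatic by degree, and the construction from the non-parallel case applies verbatim with $B$ formed using only $k\ne i_0$; preservation of $v''_{i_0}$ then follows because the modified $\delta$-polynomial still has degree $<s_{i_0}$ and non-vanishing endpoints.

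The hardest step will be the rigorous justification, in the sub-case $s_\delta>s_{i_0}$, of the equivalence $v''_{i_0}(g)=u_{i_0}(\Lambda)\iff r\ne 0$ directly from the definition of $v''_{i_0}$ as a supremum over $h\in\calO_{\CC^2,0}[x^{-1},y^{-1}]$: the quasi-homogeneous $(u_{i_0})$-leading part of any representative of $g+hf$ must be identified with the class of $p$ modulo $q$ in the relevant Laurent polynomial ring, and Lemma~\ref{division} then exhibits $r$ as the unique obstruction. Once this is in hand, the fixed-point analysis is routine: in the generic situation the fixed points of the projectivised action force $g$ to have support incompatible with $\Gamma_g=\Lambda$, giving an empty fixed set, while in degenerate configurations ($\Lambda=\delta$ or $B=\{P,Q\}$) the fixed set still carries a free $\CC^*$-factor coming from independent scaling of the two endpoint coefficients, and so its Euler characteristic is zero.
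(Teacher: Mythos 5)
Your scheme---attach to each $g$ a remainder $r(g)$ supported on the edge $\delta$ itself and scale it, in the spirit of Proposition~\ref{bad_functions}---is genuinely different from the paper's proof, which instead splits $g=g_1+g_2$ by the half-plane $\{k_x\ge q_x\}$ through the right vertex $\bar q$ of $\delta$ and scales $g_2$ wholesale, so that every minimal-level part $g_{\gamma_k}$ is multiplied \emph{in toto} by $1$ or $\lambda$ and the conditions $f_{\gamma_k}\nmid g_{\gamma_k}$ (equivalent to $v''_k(g)=u_k(\Lambda)$) are trivially preserved; the only delicate case, $\delta$ parallel to $\gamma_i$ and longer, is handled there by replacing the monomial at $\bar q$ by $(a_{\bar q}/c_{\bar q_0})f_{\gamma_i}\bar x^{\bar q-\bar q_0}$. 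Your sub-case $s_\delta>s_{i_0}$ is in fact correct and is a nice alternative to that trick: since $r\ne 0$ is \emph{exactly} the membership condition $v''_{i_0}(g)=u_{i_0}(\Lambda)$ (the divisibility characterization you flag as the hardest step does hold, by the same leading-part computation as in the proof of Proposition~\ref{bad_functions} together with the uniqueness in Lemma~\ref{division}), your action is fixed-point free on $\PP\calO^{\Lambda}_{\uu(\Lambda)}$ there and $\chi=0$ follows at once.

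The gap is the degenerate configuration $B=\{P,Q\}$, which occurs both in your non-parallel branch and in your parallel-but-shorter branch, and it is fatal as written. If in addition $\delta$ has integer length $1$, then $\delta\setminus B=\emptyset$, $r(g)\equiv 0$, your action is trivial, and the whole set is fixed, so everything rests on the residual claim; and that claim---a free $\CC^*$ from independently scaling the endpoint coefficients $a_P,a_Q$---is false in general, because an endpoint of $\delta$ may lie on an $\ell_k$-minimal \emph{edge} of $\Lambda$ carrying an active divisibility constraint, and scaling a single coefficient of $g_{\gamma_k}$ crosses the locus $f_{\gamma_k}\mid g_{\gamma_k}$ at one special parameter value, so the orbit leaves $\calO^{\Lambda}_{\uu(\Lambda)}$. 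Concretely, let $\Gamma$ have vertices $(0,5)$, $(2,1)$, $(5,0)$, so $\ell_1=2k_x+k_y$, $\ell_2=k_x+3k_y$, $s_1=2$, $s_2=1$, and let $\Lambda$ have vertices $(0,7)$, $P=(2,3)$, $Q=(3,2)$, $(6,1)$. Then $\delta=[P,Q]$ is parallel to no $\gamma_i$ and has no interior lattice points, while the $\ell_1$- and $\ell_2$-minimal faces are the edges $[(0,7),P]$ and $[Q,(6,1)]$ (congruent to $\gamma_1$, $\gamma_2$), so $B=\{P,Q\}$; scaling $a_Q$ by $\mu$ exits the set at the unique $\mu$ with $\mu a_{(3,2)}/c_{(2,1)}=a_{(6,1)}/c_{(5,0)}$, which makes $f_{\gamma_2}\mid g_{\gamma_2}$, i.e.\ $v''_2>u_2(\Lambda)$, and symmetrically for $a_P$. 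Note that a fibration by $\CC^*$ minus a point contributes $\chi=-1$ per fiber, not $0$, so the bad values cannot be shrugged off. The failure is structural: any construction moving only coefficients supported on $\delta$ has nothing left to move in this configuration, whereas the paper's splitting moves everything on one side of $\bar q$ (here $g_{\gamma_1}$ is multiplied wholesale by $\lambda$ and $g_{\gamma_2}$ is untouched). To repair your proof you would have to replace the endpoint-scaling fallback on the fixed locus by such a splitting or an equivalent mechanism.
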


\begin{proof}
Assume first that the edge $\delta$ is either not parallel to all the edges $\gamma_i$, or it is parallel to $\gamma_i$, but is shorter than $\gamma_i$. Let $\bar{q}=(q_x, q_y)$ and $\bar{q}'=(q'_x, q'_y)$, $q_x>q'_x$, be the vertices of the edge $\delta$ and let $\Lambda'$ be the set of points $\bar{k}$ in $\Lambda$ with $k_x\ge q_x$. A function germ
$g\in \calO^{\Lambda}_{\uu(\Lambda)}$ can be represented as $g_1+g_2$, where
${\rm supp}\,g_1\subset \Lambda'$, ${\rm supp}\,g_2\subset \Sigma_\Lambda\setminus\Lambda'$. (Note that $g_1\ne 0$ and $g_2\ne 0$.) One can see that all the functions of the form $g_1+\lambda g_2$ with $\lambda\ne 0$ lie in 
$\calO^{\Lambda}_{\uu(\Lambda)}$. Thus the set $\PP\calO^{\Lambda}_{\uu(\Lambda)}$ is fibred by $\CC^*$-families and therefore its Euler characteristic is equal to zero.

Now assume that the edge $\delta$ of the diagram $\Lambda$ is parallel to $\gamma_i$ and is longer than it.
Let $\bar{q}=(q_x, q_y)$ and $\bar{q}'=(q'_x, q'_y)$, $q_x>q'_x$, (respectively $\bar{q}_0=(q_{0x}, q_{0y})$ and
$\bar{q}'_0=(q'_{0x}, q'_{0y})$, $q_{0x}>q'_{0x}$) be the vertices of the edge $\delta$ (respectively of the edge
$\gamma_i$) and let $\Lambda'$ be defined as above: $\Lambda'=\{\bar{k}\in \Lambda: k_x\ge q_x\}$. Let
$g(\bar{x})=\sum a_{\bar{k}}{\bar{x}}^{\bar{k}} \in \calO^{\Lambda}_{\uu(\Lambda)}$,  $f(\bar{x})=\sum c_{\bar{k}}{\bar{x}}^{\bar{k}}$ ($\bar{x}=(x,y)$) and let
$$
g_1(\bar{x})=g_{\Lambda'}(\bar{x})-a_{\bar{q}}{\bar{x}}^{\bar{q}}+(a_{\bar{q}}/c_{\bar{q}_0})
f_{\gamma_i}(\bar{x})\cdot {\bar{x}}^{\bar{q}-\bar{q}_0},
$$
where $g_{\Lambda'}(\bar{x})=\sum\limits_{\bar{k}\in\Lambda'} a_{\bar{k}}{\bar{x}}^{\bar{k}}$,
$g_2=g-g_1$. One has ${\rm supp}\,g_1\subset \Lambda'\cup (\bar{q},\bar{q}')$,
${\rm supp}\,g_2\subset \Sigma_\Lambda\setminus\Lambda'$, $f_{\gamma_i} {\not|} \, (g_2)_{\gamma_i}$ (in $\calO_{\CC^2,0}[x^{-1}, x^{-1}]$), where $(\bar{q},\bar{q}')$ denotes the open line segment connecting the two points.
All the functions of the form $g_1+\lambda g_2$ with $\lambda\ne 0$ lie in 
$\calO^{\Lambda}_{\uu(\Lambda)}$. Thus the set $\PP\calO^{\Lambda}_{\uu(\Lambda)}$ is again fibred by $\CC^*$-families and therefore its Euler characteristic is equal to zero.
\end{proof}

\begin{proposition}\label{value}
Let the Newton diagram $\Lambda$ consist (only) of segments congruent to $\gamma_i$ for
$i\in I\subset \{1, \ldots, r\}$. Then $\chi(\PP\calO^{\Lambda}_{\uu(\Lambda)})=(-1)^{\# I}$.
\end{proposition}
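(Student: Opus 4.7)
The plan is to reduce the computation to a finite-dimensional stratification on the coefficients of $g$ supported on $\Lambda$, and then evaluate $\chi$ by inclusion--exclusion.

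The first task is to pin down the condition $\vv''(g) = \uu(\Lambda)$ on $g \in \calO^\Lambda$. Parametrising the line $\{\ell_i = u_i(\Lambda)\}$ by a single variable $z$, strictly raising $v''_i$ amounts to solving $H(z)\, f_{\gamma_i}(z) = -g^{(i)}(z)$ in $\CC[z,z^{-1}]$, where $g^{(i)}$ is the restriction of $g$ to that line. For $i \notin I$ there is no edge of $\Lambda$ parallel to $\gamma_i$, so $g^{(i)}$ is a single monomial; since $f_{\gamma_i}$ has length $s_i \ge 1$ it is not a unit in $\CC[z,z^{-1}]$, no such $H$ exists, and $v''_i(g) = u_i(\Lambda)$ automatically. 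For $i \in I$ the parallel edge $\delta_i$ of $\Lambda$ has the same length $s_i$ as $\gamma_i$, and Lemma~\ref{division} shows that cancellation is possible iff $g_{\delta_i}$ is a scalar multiple of an appropriate monomial shift $F_i$ of $f_{\gamma_i}$.

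Because only coefficients of $g$ on $\Lambda \cap \ZZ^2$ enter this description, the coefficients at interior lattice points of $\Sigma_\Lambda$ are unconstrained. The projection from $\PP\calO^\Lambda_{\uu(\Lambda)}$ onto the projectivization $\PP U$ of its ``edge part'' $U \subset V := \CC^{\Lambda \cap \ZZ^2}$ has affine fibres, so $\chi(\PP\calO^\Lambda_{\uu(\Lambda)}) = \chi(\PP U)$. Labelling the edges of $\Lambda$ as $\delta_{i_1},\ldots,\delta_{i_k}$ with $k = \#I$ and the vertices as $P_0,\ldots,P_k$, I would set $B_J := \{g \in V : a_{P_j} \ne 0 \text{ for all } j,\ g_{\delta_i} \in \CC F_i \text{ for all } i \in J\}$ and compute $\chi(\PP U) = \sum_{J \subset I}(-1)^{|J|}\chi(\PP B_J)$ by inclusion--exclusion.

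Finally, I would compute $B_J$ directly. A maximal ``run'' of consecutive edges of $J$ determines all of its scalar multipliers $c_i$ from any single one (via the vertex-matching equations $c_{i_j}\alpha_{i_j}^{\mathrm{trail}} = c_{i_{j+1}}\alpha_{i_{j+1}}^{\mathrm{lead}}$ at each shared vertex), so a run contributes exactly one $\CC^*$-parameter. Adding one $\CC^*$-parameter per vertex uncovered by $J$, and one $\CC$-parameter per interior lattice point on an edge outside $J$, gives $B_J \cong (\CC^*)^{k+1-|J|} \times \CC^{\sum_{i \notin J}(s_i-1)}$. Quotienting by the diagonal $\CC^*$-scaling yields $\PP B_J \cong (\CC^*)^{k-|J|} \times \CC^{\sum_{i \notin J}(s_i-1)}$. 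Since $\chi(\CC^*)=0$, only $J = I$ contributes (a single point), so $\chi(\PP U) = (-1)^{\#I}$.

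The main obstacle is the parameter count in the last step: one must verify that the number of run-parameters ($p$) and the number of $J$-uncovered vertices ($k+1-|J|-p$) always add to $k+1-|J|$, so that the $\CC^*$-total is independent of how $J$ decomposes into runs. Once this is checked, the factor $(\CC^*)^{k-|J|}$ forces all Euler characteristics other than that of $J = I$ to vanish.
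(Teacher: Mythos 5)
Your proof is correct, and it shares the paper's overall skeleton: the characterization of $v_i''(g)>u_i(\Lambda)$ as divisibility of the face part by $f_{\gamma_i}$ (which, since $g_{\delta_i}$ and $f_{\gamma_i}$ both have width $s_i$, forces $g_{\delta_i}\in\CC^* F_i$), followed by inclusion--exclusion over subsets of $I$. Where you genuinely diverge is in how the terms are evaluated. The paper kills every term with $\emptyset\ne I'\subsetneq I$ by one uniform fibration trick --- pick an edge of $\Lambda$ congruent to some $\gamma_i$ with $i\in I\setminus I'$, split $g=g_1+g_2$ at the $k_x$-coordinate of its right vertex, and note that $g_1+\lambda g_2$, $\lambda\in\CC^*$, stays in the stratum, so $\chi=0$ --- and only computes the full intersection $I'=I$ explicitly, identifying it with $\{g: g_\Lambda=\lambda\bar{x}^{\bar{a}}(f_I)_{\Gamma_{f_I}}\}$, whence $\chi=1$. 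You instead parametrize \emph{every} stratum outright, $B_J\cong(\CC^*)^{k+1-|J|}\times\CC^{\sum_{i\notin J}(s_i-1)}$, and the bookkeeping check you flag does close: each maximal run of length $m$ covers $m+1$ vertices and maximal runs are separated, so the covered vertices number $|J|+p$ and the $\CC^*$-count is $p+(k+1-|J|-p)=k+1-|J|$ independently of the run structure. Your route buys two things the paper leaves implicit: the finite-dimensional reduction (projection to the edge part with affine fibres) that underlies assertions such as $\chi(\PP\calO^{\Lambda})=0$, and the observation that for $i\notin I$ the line $\{\ell_i=u_i(\Lambda)\}$ meets $\Sigma_\Lambda$ in a single vertex, so $v_i''(g)=u_i(\Lambda)$ holds automatically and the inclusion--exclusion really does run only over $I$. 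Moreover, your run-propagation argument in the case $J=I$ (a single run, since the edges of $\Lambda$ form a chain, with scalars matched at shared vertices via the nonzero end coefficients of the $F_i$) is precisely the justification tacitly used in the paper when it asserts that $\bigcap_{i\in I}\PP\calO^{\Lambda}_i$ consists of the multiples of $\bar{x}^{\bar{a}}(f_I)_{\Gamma_{f_I}}$. What the paper's rescaling trick buys in exchange is a one-line vanishing argument with no run decomposition at all; both mechanisms are ultimately the same free $\CC^*$ making the Euler characteristic vanish.
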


\begin{proof}
For $I=\emptyset$, the statement is obvious. Let $I\ne\emptyset$. 
Let $\PP\calO^{\Lambda}_{i}$ be the set of functions $g\in\PP\calO^{\Lambda}_{\uu(\Lambda)}$ with
$f_{\gamma_i} | g_{\gamma_i}$ (in $\calO_{\CC^2,0}[x^{-1}, x^{-1}]$).
One has 
$$
\PP\calO^{\Lambda}_{\uu(\Lambda)}=
\PP\calO^{\Lambda}\setminus \bigcup_{i\in I}\PP\calO^{\Lambda}_{i}\,.
$$
Therefore
\begin{equation} \label{Euler}
\chi(\PP\calO^{\Lambda}_{\uu(\Lambda)})=
\chi(\PP\calO^{\Lambda})+
\sum_{I'\subset I, I'\ne\emptyset}(-1)^{\# I'}\chi \left(\bigcap_{i\in I'}\PP\calO^{\Lambda}_{i} \right)\,.
\end{equation}

Let $\bar{q}_i$, $i=0,1, \ldots , \# I$, be the vertices of the diagram $\Lambda$. The set $\PP\calO^{\Lambda}$ consists of functions $g(\bar{x})=\sum a_{\bar{k}} \bar{x}^{\bar{k}}$ with $a_{\bar{q}_i} \neq 0$ for $i=0,1, \ldots, \# I$ and $a_{\bar{k}} = 0$ for $\bar{k} \not\in \Sigma^{\Lambda}$. Its Euler characteristic is equal to zero. Assume that $I' \subsetneq I$, $I'  \neq \emptyset$. Let $[\bar{q}, \bar{q}']$ be an edge of $\Lambda$ congruent to $\gamma_i$, $i \in I \setminus I'$ ($\bar{q} = (q_x,q_y)$, $\bar{q}'=(q_x',q_y')$, $q_x>q_y'$). Let $\Lambda'= \{ \bar{k} \in \Lambda \, : \, k_x \geq q_x \}$. For $g \in \bigcap_{i \in I'} \PP\calO_i^{\Lambda}$, let $g_1(\bar{x})=g_{\Lambda'}(\bar{x})$, $g_2=g-g_1$. All the functions of the form $g_1+ \lambda g_2$ with $\lambda \neq 0$ belong to $\bigcap_{i \in I'} \PP\calO_i^{\Lambda}$. Thus $\bigcap_{i \in I'} \PP\calO_i^{\Lambda}$ is fibred by $\CC^\ast$-families and therefore $\chi(\bigcap_{i \in I'} \PP\calO_i^{\Lambda})=0$.

Let $f_I(\bar{x}):= \prod_{i \in I} f_i(\bar{x})$. The intersection $\bigcap_{i \in I} \PP\calO_i^{\Lambda}$ (the Euler characteristic of which corresponds to $I'=I$ in (\ref{Euler})) consists of the functions $g \in \PP\calO_{\CC^2,0}$ such that $g_\Lambda(\bar{x}) = \lambda \bar{x}^{\bar{a}} (f_I)_{\Gamma_{f_I}}$ where $\Gamma_{f_I}$ is the Newton diagram of $f_I$, $\lambda \neq 0$ and $\bar{x}^{\bar{a}}$ is a certain monomial. Therefore
\[ \chi \left(\bigcap_{i \in I} \PP\calO_i^{\Lambda} \right)=1. \]
\end{proof}

\begin{proof*}{Proof of Theorem~\ref{theo1}}
Propositions~\ref{bad_functions} and \ref{bad_diagrams} imply that 
\begin{equation} \label{finalPS}
P_{\{v_i'' \}}(\ttt)= \sum_\Lambda  \int_{\PP\calO^{\Lambda}_{\uu(\Lambda)}}\ttt^{{\vv}''(g)}d\chi
\end{equation}
where the sum runs over all diagrams $\Lambda$ consisting only of edges congruent to some of the edges $\gamma_i$ of the diagram $\Lambda$. Let the edges of $\Lambda$ be congruent to the edges $\gamma_i$ with $i \in I=I(\Lambda)$. Proposition~\ref{value} implies that the summand in (\ref{finalPS}) corresponding to such a diagram $\Lambda$ is equal to $(-1)^{\# I} \ttt^{\uu(\Lambda)}$.
All the diagrams of this sort are obtained from the diagrams $\Gamma_I=\Gamma_{f_I}$ by shifts by non-negative integral vectors $\bar{k}$, i.e.\ $\Lambda= \bar{k} + \Gamma_I$. One has $\uu(\Lambda)=\underline{\ell}(\bar{k}) + \sum_{i \in I} \MM_i$. Therefore
\begin{eqnarray*}
P_{\{v_i'' \}}(\ttt) & = & \sum_{\bar{k} \in \ZZ^2_{\geq 0}} \sum_{I \subset \{ 1, \ldots , r \}} (-1)^{\# I} \ttt^{\underline{\ell}(\bar{k}) + \sum_{i \in I} \MM_i} \\
& = & \left( \sum_{\bar{k} \in \ZZ^2_{\geq 0}} \ttt^{\underline{\ell}(\bar{k})} \right) \cdot \left( \sum_{I \subset \{ 1, \ldots , r \}} (-1)^{\# I} \ttt^{\sum_{i \in I} \MM_i} \right) \\
& = & \frac{\prod\limits_{i=1}^r(1-\ttt^{\MM_i})}{(1-\ttt^{\uu(x)})(1-\ttt^{\uu(y)})} .
\end{eqnarray*}
\end{proof*}

\section{Relation with an Alexander polynomial}
One can see that the equation (\ref{main}) gives the Poincar\'e series $P_{\{v''_i\}}(\ttt)$ as a finite product/ratio
of ``cyclotomic'' binomials of the form $(1-\ttt^\MM)$ with $\MM\in\ZZ_{> 0}^r$. This looks similar to the usual A'Campo
type expressions for monodromy zeta functions or for Alexander polynomials of algebraic links \cite{EN}.
Here we shall describe a relation between the Poincar\'e series (\ref{main}) and a certain Alexander polynomial.

A notion of the multi-variable Alexander polynomial for a finite collection of germs of functions
on $(\CC^n,0)$ was defined in \cite{S}: see Proposition 2.6.2 therein. (In \cite{S} it is called the (multi-variable) zeta function.) In a somewhat more precise form this definition can be found in \cite{GDC}. (The definition in \cite{GDC}
gives the one for a collection of functions if one considers the corresponding principal ideals.)

As above, let $\Gamma$ be a Newton diagram in $\RR^2$ with the edges $\gamma_1$, \dots, $\gamma_r$ of integer lengths
$s_1, \ldots, s_r$ and let $p:(X,D)\to(\CC^2, 0)$ be a toric modification of $(\CC^2, 0)$ corresponding to the diagram
$\Gamma$. For $i=1, \ldots, r$, let $\widetilde{C}_i$ be a germ of a smooth curve on $X$ transversal to the component $E_i$
of the exceptional divisor $D$. Let $C_i=p(\widetilde{C}_i)$ be the image of $\widetilde{C}_i$ in $(\CC^2, 0)$ and let
$L_i=C_i\cap S^3_\varepsilon$ be the corresponding knot in the 3 sphere $S^3_\varepsilon=S^3_\varepsilon(0)$ for $\varepsilon > 0$
small enough. The curve $C_i$ can be defined by an equation $g_i=0$ where $g_i$ is a function germ $(\CC^2, 0)\to(\CC, 0)$ with the Newton diagram
consisting of one segment parallel to $\gamma_i$, with the (integer) length 1 and with the vertices on the coordinate lines.

Let $\Delta_{\underline{g}}(\ttt)$ and $\Delta_{\underline{g}^{\underline{s}}}(\ttt)$ be the Alexander polynomials
of the collections of functions $\underline{g}=(g_1, \ldots , g_r)$ and
$\underline{g}^{\underline{s}}=(g_1^{s_1}, \ldots , g_r^{s_r})$ respectively. The polynomial $\Delta_{\underline{g}}(\ttt)$ is the classical Alexander polynomial $\Delta^L(\ttt)$ of the link $L = \bigcup L_i$ (see e.g.\ \cite{EN}).
A one-variable analogue of $\Delta_{\underline{g}^{\underline{s}}}(\ttt)$
is considered in \cite[I.5]{EN} as the Alexander polynomial of the multilink $L(\underline{s})=\bigcup s_iL_i$.
One has
$$
\Delta_{\underline{g}}(\ttt)=\frac{\prod_{i=1}^r (1-\ttt^{\mm_i})}{(1-\ttt^{\uu(x)})(1-\ttt^{\uu(y)})}\,,
$$
$$
\Delta_{\underline{g}^{\underline{s}}}(\ttt)=
\frac{\prod_{i=1}^r (1-\ttt^{{\underline{s}} \, \mm_i})}{(1-\ttt^{\uu(x)})(1-\ttt^{\uu(y)})}\,,
$$
where $\sss\, \mm_i=(s_1m_{1i}, s_2m_{2i}, \ldots, s_rm_{ri})$. The main result of \cite{IJM}
says that $\Delta_{\underline{g}}(\ttt)=\Delta^L(\ttt)$ coincides with the Poincar\'e series of the filtration corresponding to
the Newton diagram of the function $\prod_{i=1}^r g_i$.

Let the {\em reduced Poincar\'e series} of the filtration defined by $\{ v_i'' \}$ be
$$
\widetilde{P}_{\{ v_i''\}} ( \ttt) := P_{\{ v_i''\}} ( \ttt) / P_{\{ u_i\}}(\ttt), \quad P_{\{ u_i\}}(\ttt)= \frac{1}{(1-\ttt^{\uu(x)})(1-\ttt^{\uu(y)})}
$$
being the Poincar\'e series of the filtration defined by the quasihomogeneous valuations $\{ u_i \}$ on $\calO_{\CC^2,0}$. One has 
\begin{equation} \label{redPS}
\widetilde{P}_{\{ v_i''\}} ( \ttt) = \prod\limits_{i=1}^r (1- \ttt^{s_i \mm_i}).
\end{equation}

Let 
$$
\widetilde{\Delta}_{\gggg^{\sss}} ( \ttt) := \Delta_{\gggg^{\sss}} ( \ttt) / \Delta^x_{\gggg^{\sss}} ( \ttt) \cdot \Delta^y_{\gggg^{\sss}} ( \ttt)
$$ 
where $\Delta^x_{\gggg^{\sss}} ( \ttt)$ and $\Delta^y_{\gggg^{\sss}} ( \ttt)$ are the Alexander polynomials of the sets of functions $\gggg^{\sss}= \{ g_1^{s_1}, \ldots , g_r^{s_r} \}$ restricted to the coordinate axes $\CC_x$ and $\CC_y$ respectively. One can regard $\widetilde{\Delta}_{\gggg^{\sss}} ( \ttt)$ as the Alexander polynomial of the set  of functions $\gggg^{\sss}$ restricted to the complex torus $(\CC^*)^2 \subset \CC^2$. One has
\begin{equation} \label{Alexander}
\widetilde{\Delta}_{\gggg^{\sss}} ( \ttt) = \prod\limits_{i=1}^r (1- \ttt^{\sss\, \mm_i}).
\end{equation}
One can see that a relation between (\ref{redPS}) and (\ref{Alexander}) can be described in the following way. Consider products of $r$ ordered cyclotomic binomials in $r$ variables. Such a product 
$$
\prod\limits_{i=1}^r (1-\ttt^{\NNN_i}), \quad \NNN_i=(N_{i1}, \ldots , N_{ir}),
$$
can be described by the corresponding $r \times r$-matrix $N:=(N_{ij})$. The transposition of the matrix induces an involution on the set of products of this sort. One can see that this involution maps the product (\ref{redPS}) for the Poincar\'e series to the product (\ref{Alexander}) for the Alexander polynomial.


\bigskip
\noindent Leibniz Universit\"{a}t Hannover, Institut f\"{u}r Algebraische Geometrie,\\
Postfach 6009, D-30060 Hannover, Germany \\
E-mail: ebeling@math.uni-hannover.de\\

\medskip
\noindent Moscow State University, Faculty of Mechanics and Mathematics,\\
Moscow, GSP-1, 119991, Russia\\
E-mail: sabir@mccme.ru

\end{document}